\algnewcommand\algorithmicinput{\textbf{Input:}}
\algnewcommand\Input{\item[\algorithmicinput]}
\algrenewcommand\algorithmicoutput{\textbf{Output:}}
\algrenewcommand\Output{\item[\algorithmicoutput]}
\newtheorem{assumption}{Assumption}
\newtheorem{proposition}{Proposition}
\newtheorem{remark}{Remark}
\newcommand{\tp}{^\top}
\newcommand{\inv}{^{-1}}
\newcommand{\R}{\mathbb{R}}
\renewcommand{\S}{\mathcal{S}}
\newcommand{\simulator}{\S}
\newcommand{\ham}{\mathcal{H}}
\newcommand{\half}{\frac{1}{2}}
\DeclareMathOperator*{\argmin}{arg\,min}
\newcommand{\normal}{\mathcal{N}}
\newcommand{\ud}{\mathrm{d}}
\newcommand{\G}{\mathcal{G}}
\newcommand{\grad}{\nabla}
\newcommand{\backward}[1]{\overset{\leftarrow}{#1}}
\newcommand{\one}{\mathds{1}}
\newcommand{\diag}{\mathrm{diag}}
\newcommand{\id}{\mathcal{I}}
\renewcommand{\L}{\mathcal{L}}
\title{\LARGE \bf
A Dual Ensemble Kalman Filter Approach to Robust Control of Nonlinear Systems: An Application to Partial Differential Equations
}
\author{Anant A. Joshi, Saviz Mowlavi, Mouhacine Benosman
\thanks{A. A. Joshi (anantaj2@illinois.edu) is with the Coordinated Science Laboratory and the Department of Mechanical Science and Engineering at the University of Illinois Urbana-Champaign, Urbana, IL 61801, USA. S. Mowlavi (mowlavi@merl.com) is with Mitsubishi Electric Research Laboratories (MERL), Cambridge, MA 02139, USA. Mouhacine Benosman (m\_benosman@ieee.org) is with Amazon Robotics, North Reading, MA 01864, USA. This work was completed during A. A. Joshi’s internship at
MERL and prior to M. Benosman joining Amazon Robotics.}%
}
\begin{document}

\maketitle
\thispagestyle{empty}
\pagestyle{empty}

\begin{abstract}

This paper considers the problem of data-driven robust control design for nonlinear systems, for instance, obtained when discretizing nonlinear partial differential equations (PDEs). A robust learning control approach is developed for nonlinear affine in control systems based on Lyapunov redesign technique. The robust control is developed as a sum of an optimal learning control which stabilizes the system in absence of disturbances, and an additive Lyapunov-based robustification term which handles the effects of disturbances. The dual ensemble Kalman filter (dual EnKF) algorithm is utilized in the optimal control design methodology. A simulation study is done on the heat equation and Burgers partial differential equation.  

\end{abstract}

\section{Introduction}
In this paper, we are primarily interested in the robust control of nonlinear affine in control systems modelled as 
\begin{align}
\dot{x}(t) = a(x(t)) + b(x(t))u(t) + d(t,x), \quad x(0) = x_0
\label{eq:control-system}
\end{align}
where $x(t) \in \R^n$, $u(t) \in \R^m$ and $d(t,x) \in \R^l$ for all $(t,x)$, and $d$ is regarded as an unknown disturbance bounded in norm by a known real valued function $\lambda(t,x)$. The emphasis is on systems where we do not have explicit access to $a$ and $b$, but we can access trajectories of the system, either generated by a simulator or collected from real-life experiments. For simplicity, we focus in the remaining of the paper on the case of simulated trajectories. 
Furthermore, we target the application of our algorithms to the control of PDEs, when their discretized model (\ref{eq:control-system}) is available as a simulator. 

Indeed, PDE control is challenging because a PDE is by nature infinite dimensional and may have strong nonlinearities. 
The reduce-then-design approach is a well researched control methoology, which involves discretization of the PDE followed by dimensionality reduction, which yields a model amenable to the application of standard model based control approaches \cite{ 
leibfritz2007numerical, hovland2008explicit, barbagallo2009closed,sipp2016linear, 
tsolovikos2020estimation}. There is a recent body of work using data driven methods for building more accurate reduced order models of the PDE \cite{
bhattacharya2021model, fresca2021comprehensive, kaiser2021data}. While more accurate reduced order models are beneficial to the performance of the controller, their complexity makes them computationally challenging to implement. 

In this work, we focus more on the design of the controller with emphasis on obtaining the control policy with only simulator for the model, without access to the actual model parameters. See for example \cite{duriez2017machine,blanchard2021bayesian,fan2020reinforcement, Garnieretal21} for some other recent efforts in obtaining data-driven controllers using a simulator of the PDE.   
In particular, we work on the stabilization problem, which aims to drive the PDE state to zero, by posing it as an optimal control problem. The first step is to discretize the PDE in space, to yield a high-dimensional nonlinear system of ordinary differential equations (ODEs). We consider two different cases for the simulator availability: 
\begin{enumerate}
    \item a linear reduced order simulator, obtained using Dynamic Mode Decomposition with control (DMDc) \cite{proctor2016dynamic,zhang-2024} is available.
    \item an extension to the case where a simulator for the full high-dimensional nonlinear discretized model of the PDE is available.
\end{enumerate}

We build on previous work in this area \cite{zhang-2024}  by taking a robust learning control approach. We construct the robust control as a sum of two parts: an optimal learning controller, which we expect to produce stabilization in the absence of disturbances, and an additional term that builds on the optimal control term and is based on the Lyapunov redesign approach \cite{khalil} to suppress the effect caused by disturbances. 
The optimal control is approximated using the dual ensemble Kalman filter (dual EnKF) algorithm \cite{joshi-2022}, which simulates multiple interacting copies of the system.

The ensemble Kalman filter (EnKF) \cite{yang-2016,amir-2019} has historically been an algorithm used for filtering, and is a key numerical method especially for high-dimensional systems, for example in weather prediction \cite{evensen2006} (see \cite{bishop-2023} for more references). It features the design of an interacting particle system to sample from the posterior density of a filtering problem to provide a state estimate. 
The dual EnKF algorithm, inspired from the EnKF, computes the optimal control by converting the control problem into the problem of sampling from an appropriate probability density, through the log-transform duality between optimal control and filtering \cite{fleming-1982}. Such an approach for solving optimal control by posing it as a sampling problem is well studied
(see \cite{todorov-2007,kappen-2005,rawlik2013stochastic,hoffman-2017,levine-2018}). 
The novelty of the dual EnKF lies in the design of an interacting particle system, inspired from the EnKF, to solve the sampling problem. 
The dual EnKF-based control exhibits a distinct advantage in its performance on systems of high dimension -- a trait inherited from the EnKF. It performs almost two orders of magnitude faster when compared with policy gradient type approaches \cite{mihailo-2021-tac,fazel-2018-mlr} as was computationally demonstrated in \cite{joshi-2022}.

The paper is organized as follows. In Section \ref{sec:linear} we introduce and solve the problem for the case when the system is linear time invariant (LTI) to clarify the main ideas. Then we present the extension to the nonlinear affine in control case in Section \ref{sec:nonlinear}. 
Finally, in Section \ref{sec:pde} we present an application to the stabilization of PDEs, with the heat equation and Burgers' equation as examples. 





\textbf{Notation:} 
$|\cdot|$ denotes the Euclidean norm, $\tp$ denotes the transpose of a matrix, and $\id$ refers to the identity matrix.

\section{Solution for linear reduced order simulator}
\label{sec:linear}

Consider the simplified case when \eqref{eq:control-system} is a linear time invariant (LTI) system:
\begin{align}
\dot{x}(t) = Ax(t) + Bu(t) + d(t,x)
\label{eq:lin-control-system}
\end{align}
where as earlier, $x(t) \in \R^n$, $u(t) \in \R^m$ and 
$d(t,x) \in \R^l$ for all $(t,x)$, and $d$ is regarded as a disturbance. From here on we may suppress the $t$ argument to improve readability. We construct the robust control as a sum of a stabilizing control and an additional robustification term. We first present a recipe to obtain the stailizing control, and then to obtain the robust control, and lastly a simulator based method to obtain the two. 

\subsection{Stabilizing control}

Consider, for any arbitrary but fixed $T > 0$, the optimal control problem
\begin{subequations}
\begin{align}
\min_{u(\cdot)} & \bigg( x(T) \tp G x(T) + \half\int_{0}^{T} \underbrace{|Cx(t)|^2 + u(t)\tp R u(t)}_{=: \L(x(t),u(t))} \ud t \bigg)  \\
\text{ s.t. } & \text{system }\eqref{eq:lin-control-system} \text{ with zero disturbance, that is, } d \equiv 0
\end{align}
\label{eq:LQR}
\end{subequations}
where $C \in \R^{n_1 \times n}$ (for arbitrary $n_1 > 0$), $G \in \R^{n \times n}$ and $R \in \R^{m \times m}$, and  define $Q := C\tp C$. 
\begin{assumption}
We make the following assumptions about the structure of the optimal control problem \eqref{eq:LQR}
\begin{enumerate}[(i)]
\item $(A,B)$ is controllable and $(A,C)$ is observable
\item $R,G \succ 0$
\end{enumerate}

\end{assumption}
Then there exists a positive definite solution $\{P(t) : t \in [0,T]\}$ to the differential Riccati equation (DRE) \cite[Chapter 3]{k-sivan}:
\begin{align*}
-\dot{P} = A\tp P + PA - PBR\inv B\tp P + Q, \quad P_T = G
\end{align*}
which converges to $\bar{P} \succ 0$ as $T \to \infty$ and $\bar{P}$ solves the algebraic Riccati equation (ARE)
\begin{align*}
0 = A\tp \bar{P} + \bar{P}A - \bar{P}BR\inv B\tp \bar{P} + Q.
\end{align*}
Moreover, the control $u = -\bar{K}x$ with $\bar{K} := R\inv B\tp \bar{P}$ makes the system asymptotically stable \cite[Theorem 3.7]{k-sivan}. 

\subsection{Robust control}

Suppose that for the system \eqref{eq:lin-control-system} in the case of no disturbance, there exists a stabilizing control $u = -\bar{K}x$ and a strictly positive definite Lyapunov function $V(x) = \half x\tp \bar{P} x$ with $\dot{V} = -x\tp \bar{P}(A - B\bar{K})x \le 0$ along the controlled trajectories with zero disturbance. We let $\bar{K}$ be the gain obtained from the optimal control demonstrated previously and $\bar{P}$ be the solution of the ARE.  

We are interested in the idea of disturbance rejection using Lyapunov redesign \cite[Chapter 14]{khalil}. The idea is to design a robust control $u_d$ such that the controller $u = -\bar{K}x + u_d$ makes the system \eqref{eq:control-system} asymptotically stable in the presence of disturbance. To that end, we make the following assumption, and then present a design methodology for $u_d$. 

\begin{assumption}
\begin{enumerate}[(i)]
    \item The rank of $B$ is $n$. 
    \item There exists a known $\lambda$ such that $0 \le |d(t,x)| < \lambda(t,x) < \infty$ for each $(t,x) \in [0,\infty) \times \R^n$.
\end{enumerate}
\label{assn:B}
\end{assumption}
\begin{remark} 
    Assumption \ref{assn:B}-(i) is required to motivate the theoretical derivation for the linear system, but we relax it in our implementations for PDE control. 
\end{remark}

Consider the controller $u = -\bar{K}x + u_d$ with 
\begin{align*}
    u_d := - \frac{\lambda(t,x)}{| \bar{P}x|} B^{\dagger} \bar{P}x, \quad B^{\dagger} := (B\tp B)\inv B\tp,
\end{align*}
where $B^{\dagger}$ denotes the Moore-Penrose pseudoinverse. 
The following result demonstrates the effectiveness of the robust control.
%
%

\begin{proposition}
Using the control $u = -\bar{K}x + u_d$ renders the system \eqref{eq:control-system} asymptotically stable. 
\end{proposition}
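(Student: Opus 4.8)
The plan is to exhibit $V(x) = \half x\tp \bar{P}x$ as a strict Lyapunov function for the closed loop. Since $\bar{P} \succ 0$, $V$ is positive definite and radially unbounded, so it suffices to show that $\dot{V} < 0$ along the trajectories of \eqref{eq:lin-control-system} produced by $u = -\bar{K}x + u_d$ at every $x \ne 0$. Writing the closed loop as $\dot{x} = (A - B\bar{K})x + Bu_d + d$ and using $\bar{P} = \bar{P}\tp$, I would decompose
\begin{align*}
\dot{V} \;=\; \underbrace{x\tp \bar{P}(A - B\bar{K})x}_{(\mathrm{I})} \;+\; \underbrace{x\tp \bar{P}Bu_d}_{(\mathrm{II})} \;+\; \underbrace{x\tp \bar{P}d}_{(\mathrm{III})}
\end{align*}
and estimate the three terms in turn.

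Term $(\mathrm{I})$ is the nominal Lyapunov rate: substituting $\bar{K} = R\inv B\tp \bar{P}$ into the ARE gives $(A - B\bar{K})\tp \bar{P} + \bar{P}(A - B\bar{K}) = -(Q + \bar{P}BR\inv B\tp\bar{P}) \preceq 0$, hence $(\mathrm{I}) = \half x\tp\big[\bar{P}(A-B\bar{K}) + (A-B\bar{K})\tp\bar{P}\big]x \le 0$. Term $(\mathrm{II})$ is where the robustification enters: under Assumption \ref{assn:B}-(i), $B$ admits a right inverse $B^{\dagger}$ with $BB^{\dagger} = \id$, so $Bu_d = -\frac{\lambda(t,x)}{|\bar{P}x|}\bar{P}x$ and therefore $(\mathrm{II}) = -\lambda(t,x)\,|\bar{P}x|$. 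Term $(\mathrm{III})$ is controlled by Cauchy--Schwarz and the disturbance bound in Assumption \ref{assn:B}-(ii): for $x \ne 0$ (equivalently $\bar{P}x \ne 0$), $(\mathrm{III}) = (\bar{P}x)\tp d \le |\bar{P}x|\,|d| < \lambda(t,x)\,|\bar{P}x|$. Adding the last two estimates, $(\mathrm{II}) + (\mathrm{III}) < 0$ for every $x \ne 0$, and since $(\mathrm{I}) \le 0$ we obtain $\dot{V} < 0$ on $\R^n \setminus \{0\}$ with $\dot{V}(0) = 0$; asymptotic stability of the origin then follows from the standard Lyapunov theorem \cite{khalil}.

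I expect two points to require care rather than any deep difficulty. First, the cancellation $(\mathrm{II}) + (\mathrm{III}) < 0$ hinges on the matching identity $BB^{\dagger} = \id$, which needs $B$ of full row rank $n$; this is exactly what Assumption \ref{assn:B}-(i) provides (e.g.\ when $B$ is square and invertible, $B^{\dagger} = B\inv$), and is the condition the remark following Assumption \ref{assn:B} refers to. Second, $u_d$ is discontinuous at the origin through the factor $1/|\bar{P}x|$, so one should either interpret closed-loop solutions in the Filippov/differential-inclusion sense or observe that the only term of $\dot{V}$ affected, $(\mathrm{II}) = -\lambda(t,x)|\bar{P}x|$, stays continuous and vanishes as $x \to 0$, so the Lyapunov estimate extends continuously to a neighborhood of $0$. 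Beyond these bookkeeping points, the argument is the short computation above.
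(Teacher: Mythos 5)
Your proof is correct and follows essentially the same Lyapunov-redesign argument as the paper: the same $V(x)=\half x\tp\bar{P}x$, the same use of $BB^{\dagger}\bar{P}x=\bar{P}x$ under Assumption \ref{assn:B}-(i) to cancel the disturbance via $(\mathrm{II})+(\mathrm{III})<0$, and Cauchy--Schwarz on the disturbance term. You are in fact slightly more careful than the paper on two points it glosses over --- deriving $(\mathrm{I})\le 0$ explicitly from the ARE rather than asserting it, and flagging the discontinuity of $u_d$ at the origin, which the paper only handles in a remark about regularization.
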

\begin{proof}
We follow the method in \cite[Chapter 14.2]{khalil}, using the same Lyapunov function $V(x) = \frac{1}{2} x\tp \bar{P}x$ as before. 
The quadratic form $V$ is strictly positive definite (by assumption) hence is a valid Lyapunov function. Taking derivative along system trajectories, 
\begin{align*}
\dot{V}(x) &= -x\tp \bar{P}(A - B\bar{K})x  +  (Bu_d + d)\tp \bar{P} x \\
& \le  | d | \cdot | \bar{P}x| -\lambda |\bar{P} x| \\
& \le  (|d| - \lambda ) \, | \bar{P}x| < 0.
\end{align*}
For the first inequality we recall that $-x\tp \bar{P}(A - B\bar{K})x \le 0$ and use Cauchy-Schwarz inequality. 
Next by properties of the Moore-Penrose pseudoinverse, $BB^{\dagger}\bar{P}x$ is the orthogonal projection of $\bar{P}x$ onto the column span of $B$. 
Hence, under Assumption \ref{assn:B}-(i), we have $BB^{\dagger}\bar{P}x = \bar{P}x$ therefore $Bu_d = - \frac{\lambda}{| \bar{P}x|} \bar{P}x$.
\end{proof}
 \begin{remark}
     When implementing $u_d$ we add a regularizing parameter to avoid division by zero, which makes the system asymptotically stable till it enters a ball around the origin, i.e., practical stability. 
 \end{remark}

\subsection{Data-driven (Simulator-based) implementation}
In this section, we present a method to implement the control $u = -\bar{K}x + u_d$ with only access to a disturbance-free system simulator of \eqref{eq:control-system}. We use the dual ensemble Kalman filter (dual EnKF) algorithm \cite{joshi-2022} for the same. To use the algorithm, we need the following assumptions:
\begin{assumption}
We have access to the following:
\begin{enumerate}[(i)]
\item Knowledge of optimal control matrices $Q,R,G$. 
\item Simulator of the dynamical system with no disturbance, that is, we have access to function evaluations of $\S(x,u) := Ax + Bu$. Moreover, we assume we can run the simulator backward in time, that is, to find a trajectory of the system by specifying the terminal condition. 
\end{enumerate}
\label{assn:access}
\end{assumption}
\begin{remark}
\label{rmk:evaluate}
With access to a perfect simulator, one may exactly find the model matrices $A$ and $B$ in $n+m$ evaluations of the simulator (set $u=0$ and evaluate the simulator at the basis vectors of $\R^n$ to find $A$ and equivalently for $B$). However, the utility of the simulator is revealed when we consider the nonlinear case, especially for high-dimensional systems like partial differential equations, where estimating the state dynamics in this manner is not possible. 
\end{remark}

The emphasis is on obtaining the controller in a model-free way. The following three steps are done (which are elaborated upon after listing them):
\begin{enumerate}
\item Find an approximation to $\bar{P}$, the solution of the ARE, using the dual EnKF algorithm \cite{joshi-2022}. See Appendix \ref{app:linenkf} for details
\item Find an approximation $\bar{u}^{(N)}$ for $\bar{u} := -\bar{K}x$ using \cite[Algorithm 2]{joshi-2022} (recalled in Appendix \ref{app:ubar})
\item Find an approximation $u_d^{(N)}$ for $u_d$ using Algorithm \ref{alg:ud}
\end{enumerate}

\textbf{Step 1:}
Using the simulator, we find $\bar{P}^{(N)}$, an approximation to the solution of the ARE $\bar{P}$ by running the dual EnKF algorithm of \cite{joshi-2022}, which simulates $N$ copies of the dynamical system  along with a mean-field coupling term to approximate the solution of the ARE. The algorithm is provided in Appendix \ref{app:linenkf}. 

\textbf{Step 2:} 
To help evaluate the optimal control in a model-free way, define the Hamiltonian,
\begin{align}
\ham(x,u) &:= (\bar{P}^{(N)}x)\tp(Ax + Bu) + \half(x\tp Q x + u\tp R u) \nonumber
\\ &= (\bar{P}^{(N)}x)\tp \S(x,u) + \half\L(x,u) \label{eq:ham-lin}
\end{align}
The Hamiltonian is constructed so that it can be evaluated using function calls of $\S$. For a fixed $x$, it is a quadratic function of $u$ with the unique minima at the optimal control. 
Therefore, $\bar{u} = \argmin_{u} \ham(x,u)$ and the minimization can be carried out using gradient estimation as shown in \cite[Algorithm 2]{joshi-2022} (recalled in Appendix \ref{app:ubar}) or using zero-order methods, such as \cite{bach-2016}. 

\textbf{Step 3:}
Similarly, to find $u_d$ we solve $u_d = -\lambda \argmin_{u} | Bu - \frac{\bar{P}x}{| \bar{P}x|} |$ in Algorithm \ref{alg:ud}.  If $B$ is known, one may directly use the Moore-Penrose pseudo inverse. If $B$ is unknown, one may use zero order optimization methods \cite{bach-2016} where $Bu = \S(0,u)$ can be obtained using only access to simulator or one may estimate $B$ as specified in Remark \ref{rmk:evaluate} and use the pseudo inverse. 
{Using zero order optimization is  preferred over estimating $B$ in cases when the simulator is very high dimensional. }

\begin{algorithm}
\begin{algorithmic}[1]
\Input System state $x$, regularizing parameter $r$, robust gain $\lambda$, $\bar{P}^{(N)}$
\State $r_1 := \max( \, |\bar{P}^{(N)}x| \, , \, r \, ) $
\If{$B$ is known}
\State $B^{\dagger} := (B\tp B)\inv B\tp$
\State $v^{(N)} := r_1\inv B^{\dagger} \bar{P}^{(N)}x$
\ElsIf{$B$ is unknown}
\State $v^{(N)} := \argmin_{v} | \simulator(0,v) - \frac{\bar{P}^{(N)}x}{r_1} |$
\EndIf
\State \Return $u^{(N)}_d := -\lambda v^{(N)}$
\end{algorithmic}
\caption{Algorithm to find $u_d$}
\label{alg:ud}
\end{algorithm}


\section{Extension for nonlinear discretized model of the PDE}
\label{sec:nonlinear}

Let us consider now the main result of this paper, dealing with the case of the nonlinear affine in control system \eqref{eq:control-system}. We will extend the robust control design methodology presented in Section \ref{sec:linear} to \eqref{eq:control-system}. Similar to the previous section, we will obtain the robust control as a sum of a stabilizing control and a robustification term, and then present a simulator based methodology to obtain both terms. 
\subsection{Stabilizing control}
In an effort to find a stabilizing control $\bar{u}$, we consider, for any arbitrary but fixed $T > 0$, the optimal control problem
\begin{subequations}\label{eq:nLQR}
\begin{align}
\min_{u(\cdot)} & \bigg( \mathcal{G}(x(T)) + \half \int_{0}^{T} \underbrace{c(x(t)) + u(t)\tp R u(t)}_{=: \L(x(t),u(t))} \ud t \bigg) \\
\text{ s.t. } & \text{system }\eqref{eq:control-system} \text{ with zero disturbance, that is, } d \equiv 0
\end{align}
\end{subequations}
where $c,\G$ are non-negative real valued function, and $R \in \R^{m \times m}$ is symmetric and strictly positive definite. The value function for the problem is defined as the cost-to-go,
\begin{align*}
    \phi(s,x) &:= \min_{u(\cdot)}  \bigg( \mathcal{G}(x(T)) + \half \int_{s}^{T} \L(x(t),u(t)) \ud t \bigg) \\
\text{ s.t. } & \text{system }\eqref{eq:control-system} \text{ with } X_s = x \text{ and } d \equiv 0
\end{align*}
and it satisfies the Hamilton Jacobi Bellman partial differential equation. The optimal control is computed as $\bar{u}(t,x) = -R\inv b(x)\tp \grad \phi(t,x)$ \cite{liberzon}. 
\subsection{Robust control}
\begin{assumption}
\begin{enumerate}[(i)]
    \item There exists a control law $\bar{u}$ which makes \eqref{eq:control-system} with zero disturbance asymptotically stable. Moreover, there exists a strictly positive definite Lyapunov function $V$ with $\dot{V} < 0$ along the controlled trajectories with zero disturbance. 
    \item The rank of $b(x)$ is $n$ for all $x \in \R^n$. 
    \item There exists a known $\lambda$ such that $0 \le |d(t,x)| < \lambda(t,x) < \infty$ for each $(t,x) \in [0,\infty) \times \R^n$.
\end{enumerate}
\label{assn:b}
\end{assumption}
\begin{remark}
    Assumption \ref{assn:b}-(ii) is required for the theoretical result, but will be relaxed in the PDE control implementation. 
\end{remark}


Similar to the linear case, consider the controller $u = \bar{u} + u_d$ with 
\begin{align*}
    u_d := - \frac{\lambda}{| \grad V |} b^{\dagger} \grad V, \quad b^{\dagger} := (b\tp b)\inv b\tp.
\end{align*}


\begin{proposition}
Using the control $u = \bar{u} + u_d$ renders the system \eqref{eq:control-system} asymptotically stable. 
\end{proposition}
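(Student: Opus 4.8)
The plan is to mirror the proof of the linear counterpart and the Lyapunov redesign argument of \cite[Chapter 14.2]{khalil}, using the very same Lyapunov function $V$ supplied by Assumption \ref{assn:b}-(i). Since $V$ is strictly positive definite (and presumed differentiable, coming from the value function $\phi$), it is a valid Lyapunov candidate, and the whole task reduces to showing that its time derivative along the closed-loop trajectories of \eqref{eq:control-system} under $u = \bar{u} + u_d$ is negative away from the origin.

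First I would split the closed-loop dynamics as $\dot{x} = \big(a(x) + b(x)\bar{u}\big) + \big(b(x)u_d + d(t,x)\big)$ and differentiate $V$ along it:
\begin{align*}
\dot{V} = \grad V\tp\big(a(x) + b(x)\bar{u}\big) + \grad V\tp\big(b(x)u_d + d\big).
\end{align*}
The first term is exactly the derivative of $V$ along the zero-disturbance controlled trajectory, hence strictly negative by Assumption \ref{assn:b}-(i). For the second term I would invoke Assumption \ref{assn:b}-(ii): since $b(x)$ has rank $n$, the orthogonal projection $b(x)b^{\dagger}$ onto the column span of $b(x)$ is the identity, so $b(x)u_d = -\frac{\lambda}{|\grad V|}\,b(x)b^{\dagger}\grad V = -\frac{\lambda}{|\grad V|}\grad V$ and therefore $\grad V\tp b(x)u_d = -\lambda|\grad V|$. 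Combining this with Cauchy--Schwarz, $\grad V\tp d \le |d|\,|\grad V|$, and Assumption \ref{assn:b}-(iii) gives
\begin{align*}
\grad V\tp\big(b(x)u_d + d\big) \le \big(|d| - \lambda\big)|\grad V| < 0,
\end{align*}
so that $\dot{V} < 0$ off the origin, and the standard Lyapunov asymptotic stability theorem concludes.

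The algebra here is routine; the real obstacle is regularity and the boundary cases. I must make sure $V$ (through $\phi$, a solution of the HJB equation that in general is only a viscosity solution) is differentiable where $\grad V$ is used, and I must deal with points $x\neq 0$ at which $\grad V = 0$ — there $u_d$ is singular and the bound above degenerates. This is precisely where a regularizing parameter must be inserted (as in the linear development), at the cost of weakening the conclusion to practical stability, i.e.\ convergence to a small ball around the origin rather than true asymptotic stability. One should also verify that the strict inequality $\dot{V}<0$ holds uniformly enough on a neighborhood of the origin (e.g.\ $\dot V$ negative definite there) to legitimately apply the Lyapunov theorem, and, if a global statement is intended, that $V$ is radially unbounded.
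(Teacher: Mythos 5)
Your proof is correct and follows essentially the same route as the paper: the same Lyapunov function from Assumption \ref{assn:b}-(i), the split of $\dot V$ into the nominal (negative) part and the $b(x)u_d + d$ part, the projection identity $b(x)u_d = -\frac{\lambda}{|\grad V|}\grad V$ from the rank condition, and Cauchy--Schwarz to bound the disturbance term. Your additional caveats about differentiability of $V$, the singularity at $\grad V = 0$, and the resulting downgrade to practical stability are well taken and match the paper's own remark about the regularizing parameter in the linear case.
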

\begin{proof}
We follow the method in \cite[Chapter 14.2]{khalil}. 
Taking the derivative of $V$ along system trajectories, 
\begin{align*}
\dot{V}(x) &= \grad V (x(t)) \tp \bigg( a(x(t)) + b(x(t))\bar{u}(t) \\ & \quad + b(x(t))u_d(t) + d(t,x) \bigg) \\
& \le  (| d | \cdot | \grad V(x)| -\lambda |\grad V (x)|) \\
& \le (|d| - \lambda ) |\grad V (x)| < 0.
\end{align*}
For the first equality we recall Assumption \ref{assn:b} and Cauchy-Schwarz inequality. 
Moreover, under Assumption \ref{assn:b}-(ii), $bu_d = - \frac{\lambda}{| \grad V(x)|} \grad V(x)$.
\end{proof}

\subsection{Data-driven (Simulator-based) implementation}

To approximate $\bar{u}$ and $u_d$ we use an approach similar in spirit to Section \ref{sec:linear}. First, we use the nonlinear dual EnKF algorithm \cite{joshi-2022} to approximate the gradient of value function $\grad \phi^{(N)}(x)$. Implementation details can be found in Appendix \ref{app:nlenkf}.
Then we define the nonlinear counterpart of the Hamiltonian  
\begin{align} \label{eq:ham-nl}
\ham(x,u) &:= (\grad \phi(x)^{(N)})\tp \S(x,u) + \half\L(x,u)
\end{align}
where the simulator is now nonlinear, that is, $\S(x,u) = a(x) + b(x)u$. The Hamiltonian can again be evaluated using function calls of the simulator. Moreover, it is quadratic in the control, and can be minimized using \cite[Algorithm 2]{joshi-2022} (recalled in Appendix \ref{app:ubar}), or zero order optimization approaches \cite{bach-2016}. Similar to the linear control case, to find $u_d$ we use $u_d = -\lambda \argmin_{u} | b(x)u - \frac{\grad V(x)}{| \grad V(x)|} |$ as given in Algorithm \ref{alg:ud} (replacing $\bar{P}^{(N)}x$ by $\grad V^{(N)}(x)$) where if $b$ is not known, the optimization can be done by zero-order approaches \cite{bach-2016} or by estimating $b$ similar to Remark \ref{rmk:evaluate}.

\section{Application to forced nonlinear PDEs}
\label{sec:pde}
We consider PDEs of the form
\begin{align*}
    \frac{\partial z}{\partial t}(t,y) + \mathcal{F}(z(t,y)) = \omega(t,y),
\end{align*}
where $\mathcal{F}$ is a differential operator that specifies the structure of the PDE, $z$ is the state of the PDE and $\omega$ is the external input. The functions $z, \omega : \R_+ \times [0,L] \to \R$. Mathematically, the goal of stabilization is to make 
\begin{align}\label{eq:L2objective}
\lim_{t \to \infty}\|z(t,\cdot)\|_{L^2} := \lim_{t \to \infty}\int_{0}^{L} |z(t,y)|^2 \ud y = 0.    
\end{align}
For numerical implementation, we consider a time interval of $[0,T]$ and discretize the PDE in space on a grid of $p$ uniformly-spaced points in $[0,L]$ to obtain $z_p(t) \in \R^p$ for each $t \ge 0$, so that the PDE is reduced to a set of $p$ ODEs. We choose $m$ basis functions $\{\chi_j\}_{j=1}^{m}$ to discretize the control as 
$\omega(x,t) = \sum_{j=1}^m \chi_j(x) U_j(t)$ where $\chi_j : [0,L] \to \R$ is the indicator function of $[\frac{j-1}{m},\frac{j}{m}]$ and $U_j : [0,T] \to \R$. 
Then $U := (U_1,U_2,\ldots,U_m)$ is interpreted as the control. The discretized PDE has nonlinear affine in control form,
\begin{align}
\frac{\ud z_p}{\ud t} + F(z_p) = BU
\label{eq:discrete-pde}
\end{align}
where $F$ approximates the derivatives and $B$ obtained by discretizing $\{\chi_j\}_{j=1}^{m}$. To study the effect of disturbances, we let $U(t) = u(t) + d(t)$ where $u(t)$ is the control action applied to the system and $Bd(t)$ is the effective disturbance acting on the system. We consider here disturbances added directly to the control, which is a special case of the theory presented earlier. In the simulations, we consider only time varying disturbances, so we denote disturbance as $d(t)$. 
We use the Controlgym library in Python \cite{zhang2023controlgym} for numerical implementation. 
Now we first recall the heat equation and Burgers equations, give some information about the implementation of both PDEs, then go on to discuss the simulation results.
\subsection{Heat equation}

The heat equation is given by 
\begin{align*}
    \frac{\partial z}{\partial t}(t,y) - \nu \frac{\partial^2 z}{\partial y^2}(t,y) = \omega(t,y).
\end{align*}
The value of $\nu = 0.002$ is used. Since the PDE is linear, upon discretization, \eqref{eq:discrete-pde} reduces to an LTI system, hence we use the robust control method design discussed in Section \ref{sec:linear} to implement the robust control. The dimension of the control used is $m=10$, while the dimension of the state is $n=100$, thus relaxing Assumption \ref{assn:B}-(i) in the implementation.  Other simulations details can be obtained in Appendix \ref{app:heat}.
\begin{figure}
\centering
    \begin{subfigure}[b]{0.45\textwidth}
        \centering
        \includegraphics[scale = 0.35]{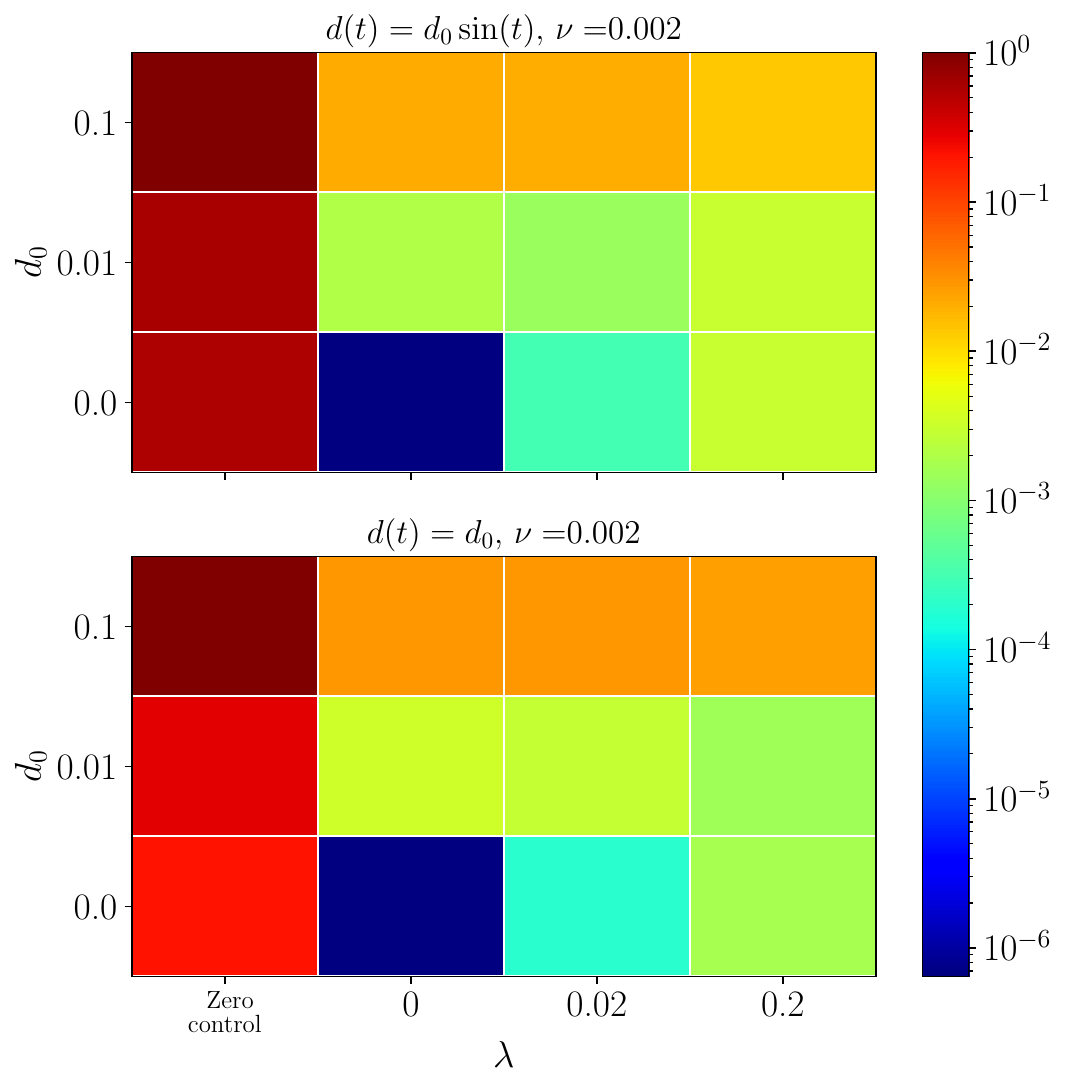}
        \caption{}
        \label{fig:heatmap-heat}
    \end{subfigure}
    \begin{subfigure}[b]{0.45\textwidth}
        \centering
        \includegraphics[scale = 0.35]{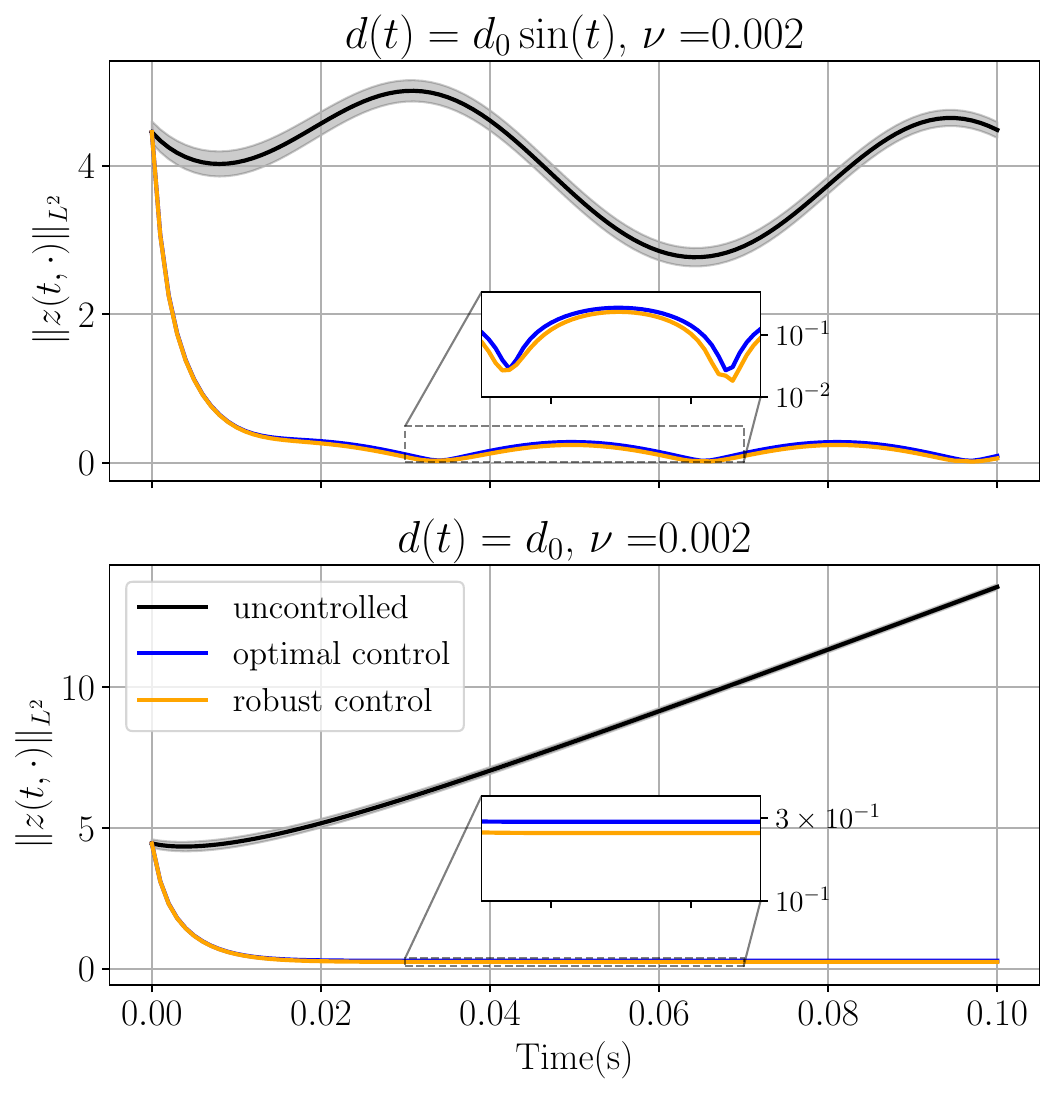}
        \caption{}
        \label{fig:time-heat}
    \end{subfigure}
\caption{Results for control of heat equation. (a) Mean of $\frac{\|z(T,\cdot)\|_{L^2}}{\|z(0,\cdot)\|_{L^2}}$ over 100 different simulations (b) Mean and variance of $\|z(t,\cdot)\|_{L^2}$ for $d_0=0.1, \lambda=0.2$. }
\label{fig:heat}
\end{figure}


\subsection{Burgers' equation}

Recall the Burgers' equation, 
\begin{align*}
\frac{\partial z}{\partial t}(t,y) + z(t,y) \frac{\partial z}{\partial y}(t,y) - \nu \frac{\partial^2 z}{\partial y^2}(t,y) = \omega(t,y).
\end{align*}



We present two sets of results to demonstrate the performance of the robust control algorithm developed in the paper when applied to the Burgers equation. In the first set of results, we calculate the robust control using a simulator of the linear reduced order model of the PDE, and in the second set of results, the robust control is calculated using a simulator of the full nonlinear discretized PDE. In both results, the obtained control is tested by applying it on the full nonlinear discretized PDE. We study both sets of results for two values of viscosity, $\nu = 0.02,0.002$.

\noindent \textbf{Control design using linear reduced model:} 
We perform a model reduction on the Burgers' equation to obtain a linear system using the Dynamic Mode Decomposition for control (DMDc) algorithm \cite{zhang-2024}. The DMDc yields a projection matrix $\Phi \in \R^{n \times p}$ with orthogonal rows, where $n$ is the dimension of the reduced state (usually $n \ll p$), and an LTI system  
\begin{align}
\dot{x} = Ax + Bu
\label{eq:DMDc}
\end{align}
where $x \in \R^n$ is the reduced state and satisfies the relation $x = \Phi z_p$, and $u \in \R^m$ where as described, $n$ and $m$ are chosen by the user. The discretized PDE state can be reconstructed as $z_p = \Phi\tp x$. DMDc yields a discrete-time system, but we transform it into its continuous-time equivalent, since we design control in the continuous time domain. 

Using a simulator for the obtained linear system \eqref{eq:DMDc}, we get an approximation to the optimal stabilizing LQ controller, with the linear EnKF methodology described in Section \ref{sec:linear}. 
That is, we choose optimal control weights $Q,R,G$ (as described in \eqref{eq:LQR}) and find $\bar{P}^{(N)}$ for the LTI system \eqref{eq:DMDc}. The stabilizing control is calculated as $u = -R\inv B\tp \bar{P} x$ where $x = \Phi z_p$. We use the robust control methodology described in Section \ref{sec:linear} to design $u_d$.

\noindent \textbf{Control design using full nonlinear model:}
Using a simulator for the full nonlinear PDE \eqref{eq:discrete-pde}, we obtain a stabilizing control $\bar{u}$ and robust controller $u_d$ with the nonlinear dual EnKF methodology discussed in Section \ref{sec:nonlinear}. We choose $c(x) := |x|^2$ and $\mathcal{G}(x) = |x|^2$ in \eqref{eq:nLQR}. 
The dimension of the state used is $p=128$ and the dimension of the control is $m=10$, thus relaxing Assumption \ref{assn:b}-(ii) in the implementation. 


\subsection{Discussion of results}
We study the effect of two types of disturbances, sinusoidal with $d(t) := d_0\sin(t)$ and constant $d(t) := d_0$. The $\lambda$ function is chosen as a constant function. The obtained control is applied to the full discretized nonlinear PDE \eqref{eq:discrete-pde}. 
Both PDEs are initialized for 100 randomly sampled iid initial conditions (inspired from the previous work \cite{zhang-2024}): 
\begin{align*}
    z(0,x) &= \alpha \, \mathrm{sech}\left(\frac{1}{\beta}\left({x} - \frac{1}{2L}\right)\right), \quad L=1, \; \\ 
    \alpha &\sim \mathrm{unif}(0.9, 1.1), \quad \beta \sim \mathrm{unif}(0.04, 0.06).
\end{align*}
The 100 trajectories are simulated for the case of zero control (that means $\bar{u} = u_d = 0$), and then with the robust control given by various values of $\lambda$ (the stabilizing control $\bar{u}$ is obtained from the dual EnKF). These simulations are repeated for various values of $\lambda$, $d_0$, both types of disturbances mentioned, and the values of $\nu$ mentioned. 


Recall that the control objective \eqref{eq:L2objective} in finite time is to drive $\|z(T,\cdot)\|_{L^2}$  as close to zero as possible. To illustrate the performance of the controllers we make two plots for both PDEs. 
The first plot is a heat map depicting the mean value (over the 100 simulations) of $\frac{\|z(T,\cdot)\|_{L^2}}{\|z(0,\cdot)\|_{L^2}}$ -- the ratio is plotted to highlight the order of magnitude by which $\|z(T,\cdot)\|_{L^2}$  has reduced relative to its initial value $\|z(0,\cdot)\|_{L^2}$. 
Moreover, in a second plot, we plot the mean and variance (over the 100 simulations) of  $\|z(t,\cdot)\|_{L^2}$ as a function of $t$ for $d_0 = 0.1$ and $\lambda = 0.2$. The trajectory is generated using three different control policies -- ``uncontrolled" trajectories are when the control is zero, ``optimal controlled" trajectories have $\lambda = 0$ (that is, only the optimal stabilizing control is applied and robust control is zero) and ``robust controlled" trajectories are with $\lambda = 0.2$. The plots for heat equation, Burgers with DMDc and Burgers with full nonlinear model are found in Figures \ref{fig:heat}, \ref{fig:dmdc}, and \ref{fig:full}, respectively. Other simulations details can be obtained in Appendix \ref{app:burgers}. 

For both PDEs, we observe that in the presence of disturbances, the robust control works better than the optimal control (control with $\lambda = 0$) in stabilizing the PDE. For the Burgers' equation in particular, the trajectories with robust control exhibit an order of magnitude lower value of $\|z(T,\cdot)\|_{L^2}$ compared to those with only the optimal control. Additionally, for the Burgers' equation, we also observe that the optimal control obtained using the full nonlinear model is significantly more effective than using the reduced-order DMDc model -- the controlled state settles much faster and closer to zero with the former than the latter. However, while the transient performance of the robust control is much better using the full nonlinear model, the settling performance is similar using both DMDc and the full nonlinear model. Thus the robust control term also compensates for model mismatch between DMDc and the full nonlinear model. The conclusions presented are true for all values of $\lambda$ chosen, both types of disturbances, and both values of viscosity. 



\begin{figure*}
\centering
    \begin{subfigure}[b]{0.45\textwidth}
        \centering
        \includegraphics[scale = 0.35]{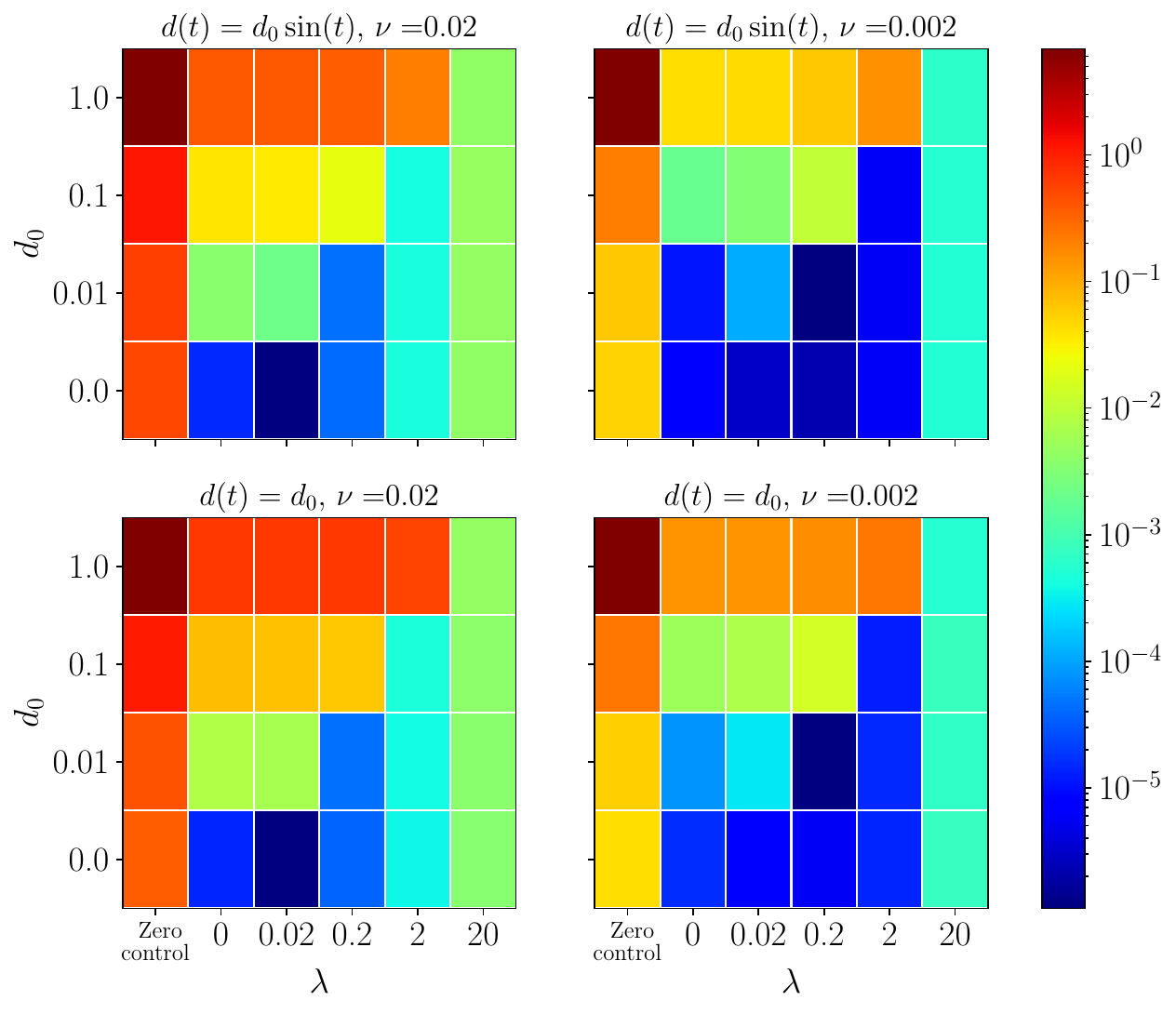}
        \caption{Mean of $\frac{\|z(T,\cdot)\|_{L^2}}{\|z(0,\cdot)\|_{L^2}}$ over 100 different simulations}
        \label{fig:heatmap-dmdc}
    \end{subfigure}
    \begin{subfigure}[b]{0.45\textwidth}
        \centering
        \includegraphics[scale = 0.35]{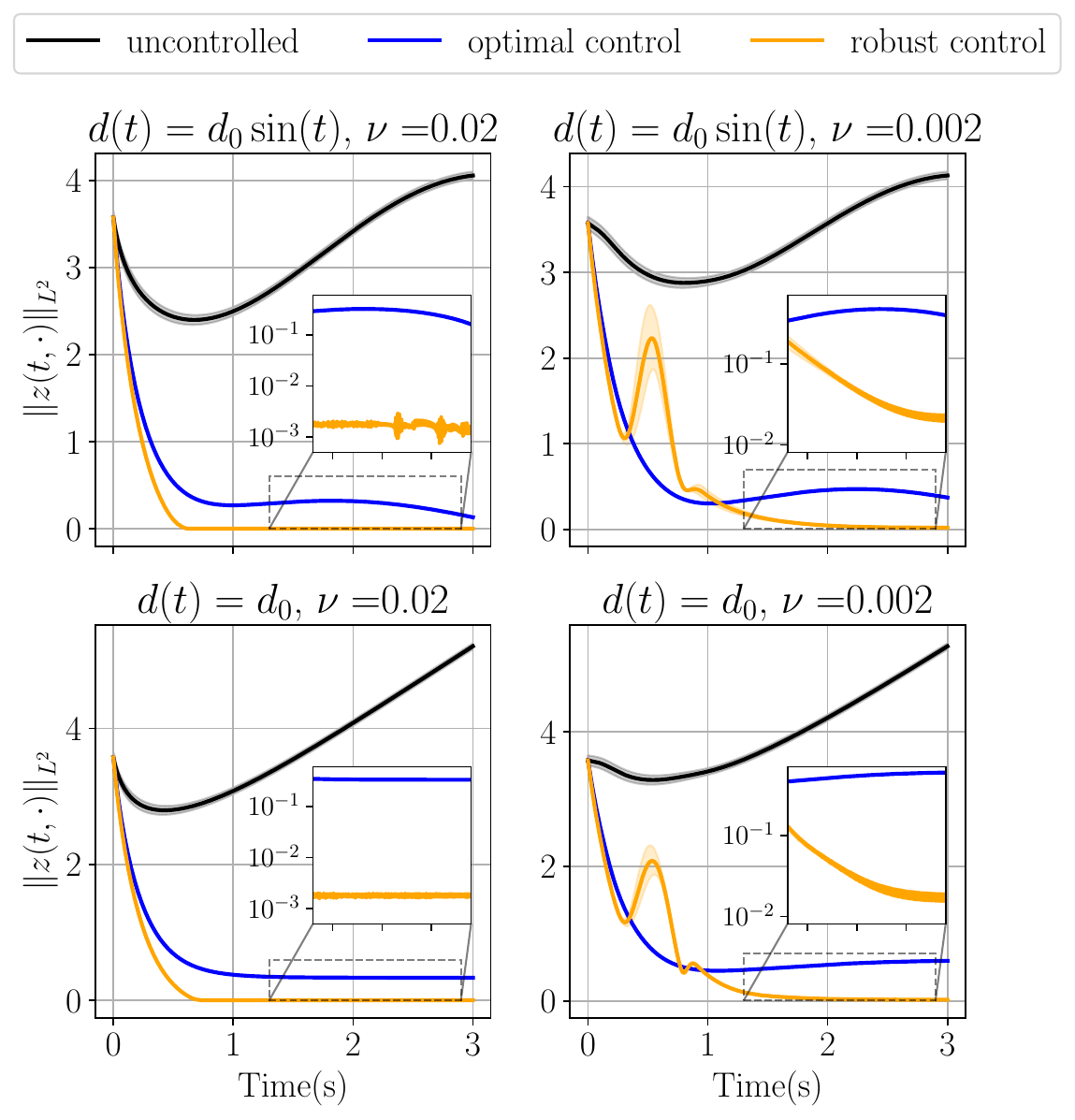}
        \caption{Mean and variance of $\|z(t,\cdot)\|_{L^2}$ for $d_0=0.1,\lambda=0.2$.}
        \label{fig:time-dmdc}
    \end{subfigure}
\caption{Results for control of Burgers equation using reduced order DMDc model.}
\label{fig:dmdc}
\end{figure*}

\begin{figure*}
\centering
    \begin{subfigure}[b]{0.45\textwidth}
        \centering
        \includegraphics[scale = 0.35]{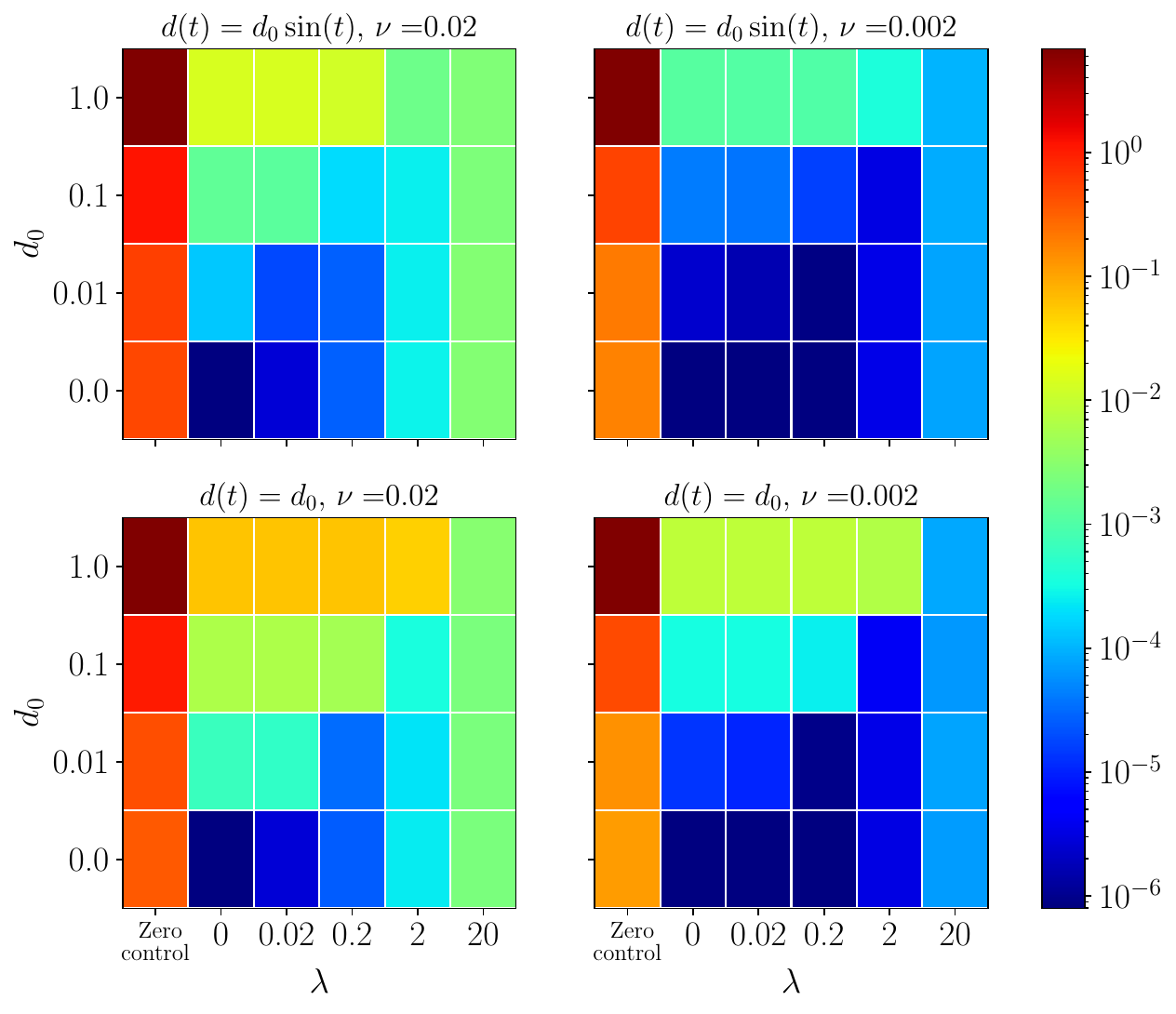}
        \caption{Mean of $\frac{\|z(T,\cdot)\|_{L^2}}{\|z(0,\cdot)\|_{L^2}}$ over 100 different simulations}
        \label{fig:heatmap-full}
    \end{subfigure}
    \begin{subfigure}[b]{0.45\textwidth}
        \centering
        \includegraphics[scale = 0.35]{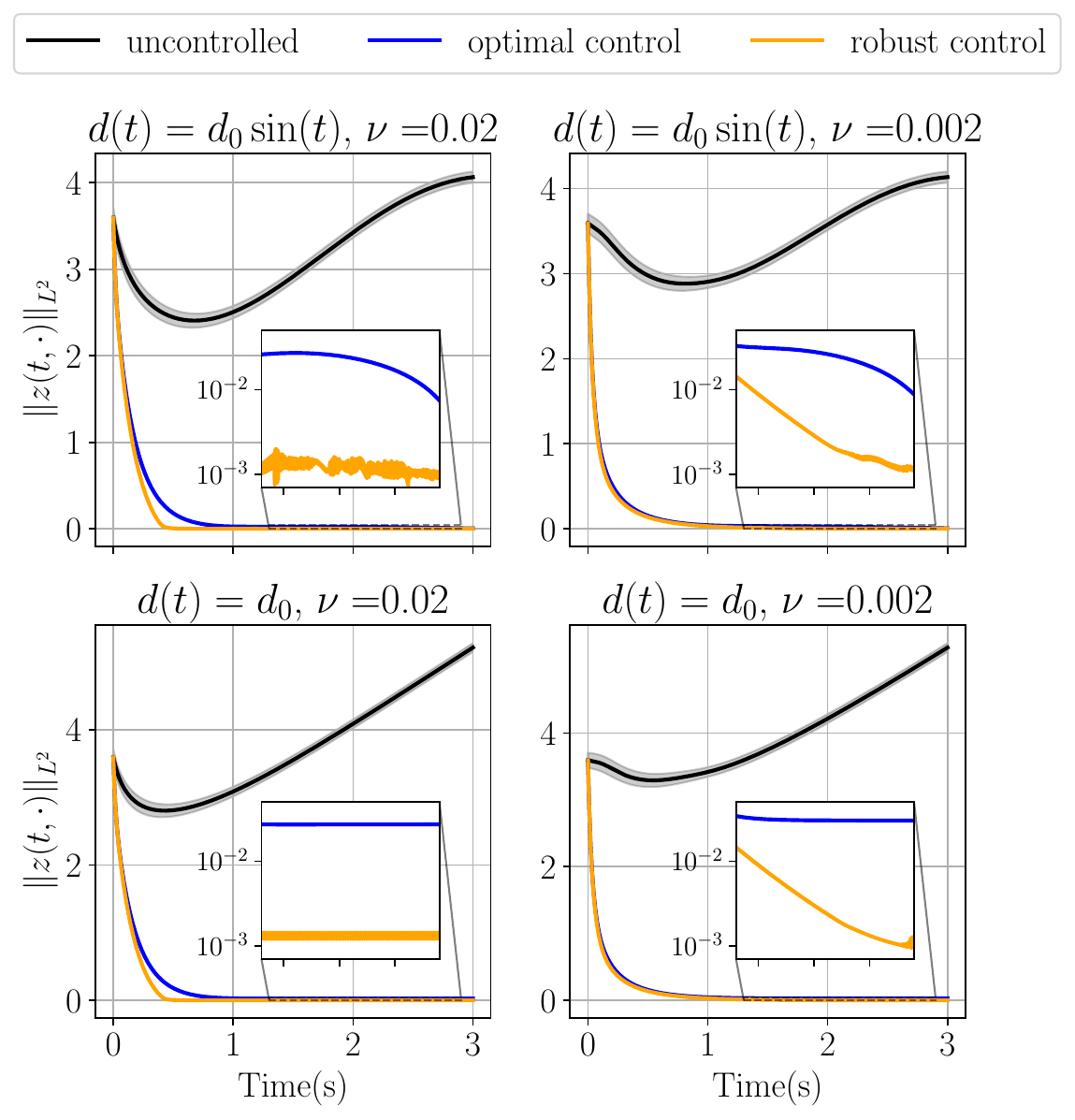}
        \caption{Mean and variance of $\|z(t,\cdot)\|_{L^2}$ for $d_0=0.1,\lambda=0.2$.}
        \label{fig:time-full}
    \end{subfigure}
\caption{Results for control of Burgers equation using full nonlinear model.}
\label{fig:full}
\end{figure*}


\section{Future work}
Some avenues for future work are: (i) to extend this approach to different PDEs such as the Allen-Cahn equation or the Korteweg deVries equation, (ii) to design a controller using information from sensors, thus turning a full state feedback problem into a partially observed problem (iii) to extend to the case when $d$ is Gaussian white noise. 

\bibliographystyle{siam} 
\bibliography{refs} 
\appendices

\section{Dual EnKF algorithm}
\label{app:enkf}
In this algorithm, we simulate over the time horizon $[0,T]$ an ensemble of $N$ particles $\{ Y_t^i : 1 \le i \le N, \quad 0 \le t \le T \}$ where the evolution equation for each particle is given by an Ito stochastic differential equation (SDE). 

\subsection{Linear system}
\label{app:linenkf}
The SDE \cite[Section 2]{joshi-2022} is given by
\begin{align*}
\ud {Y}^i_t &= {A {Y}^i_t \ud t + B\ud \backward{\eta}^i_t } + 
L^{(N)}_t \left(\frac{C{Y}^i_t+ C\hat{n}^{(N)}_t}{2} \right), \\ 
{Y}^i_T  &\stackrel{\text{i.i.d}}{\sim} \mathcal
N(0,S_T),\quad 1\leq i\leq N, \\
\end{align*}
${\eta}^i$ are an i.i.d Brownian motions with covariance $R\inv$ and $\hat{n}^{(N)}_t := \left( \frac{1}{N} \sum_{j=1}^{N} Y_t^j \right) $, and $L_t^{(N)} := S_t^{(N)}C\tp$ with
\begin{align*}
S^{(N)}_t := \frac{1}{N} \sum_{j=1}^{N} ({Y}_t^j - \hat{n}_t^{(N)}) ({Y}_t^j - \hat{n}_t^{(N)})\tp. 
\end{align*}
Finally, $\bar{P}^{(N)} := (S^{(N)}_0)\inv$.

\subsection{Nonlinear system}
\label{app:nlenkf}
The SDE \cite[Section 3]{joshi-2022} for the particle system is
\begin{align*}
\ud {Y}^i_t &= {a ({ Y}^i_t)\ud t +  b ({ Y}^i_t) \ud \backward{\eta}^i_t}  
 \\
 &+ \left(\sum_{j=1}^{N}(Y_t^j - n^{(N)}_t )(c({ Y}^j_t) - \hat{c}_t^{(N)}  )^\top\right) \frac{(c(z) + \hat{c}^{(N)})}{N-1} \\
{Y}^i_T  &\stackrel{\text{i.i.d}}{\sim} \mathcal
N(0,S_T),\quad 1\leq i\leq N, 
\end{align*}
and $\hat{c}^{(N)}_{t} := N^{-1} \sum_{i=1}^N c(Y^i_{t})$. Finally, $\grad\phi^{(N)}(x) := (S^{(N)}_0)\inv x$, where $S^{(N)}_t$ is the empirical covariance of $\{Y_t^i\}$, defined same as in the linear case. 

\subsection{Algorithm for Hamiltonian minimization }
\label{app:ubar}
Algorithm \ref{alg:ubar} minimizes Hamiltonian and calculates $\bar{u}$ for Section \ref{sec:linear}. To minimize Hamiltonian in Section \ref{sec:nonlinear}, in Algorithm \ref{alg:ubar} make the following two changes: replace $\bar{P}^{(N)}$  by $\grad\phi^{(N)}$ and use the Hamiltonian defined in \eqref{eq:ham-nl}.
\begin{algorithm}
\begin{algorithmic}[1]
\Input System state $x$, $\bar{P}^{(N)}$, $Q,R$, Hamiltonian definition from \eqref{eq:ham-lin}, $\{e_i\}_{i=1}^{m}$ the standard basis of $\R^m$
\If{$B$ is known}
\State \Return $\bar{u}^{(N)} := -R\inv B\tp \bar{P}^{(N)}x$
\ElsIf{$B$ is unknown}
\For{i=1,2,\ldots,m}
    \State $(\bar{u}^{(N)})_i := H(x,R\inv e_i) - H(x,0) - \half (R\inv)_{ii}$
    \EndFor
\EndIf
\end{algorithmic}
\caption{Algorithm for Hamiltonian minimization}
\label{alg:ubar}
\end{algorithm}

\section{Simulation details}
\subsection{Heat equation}
\label{app:heat}
Simulation parameters are as follows. The simulation time $T = 0.1$ 
with simulation time step = 0.001. 
The number of states of the discretized PDE \eqref{eq:discrete-pde} is $p=100$. 
The number of control basis functions is $m=8$, and they are $\chi_j$ is the indicator functions of $[\frac{i}{10},\frac{(i+1)}{10}]$. The regularization parameter for robust control is $r = 0.002$.
The matrices  $Q = \id$, $G=\id$, $R=\id$ for both, the full nonlinear control and control using DMDc model.
The number of dual EnKF particles is chosen as $N=10000$. The controlgym library \cite{zhang2023controlgym} is used for PDE simulation. 

\subsection{Burgers equation}
\label{app:burgers}
Simulation parameters are as follows. The simulation time $T = 3$ 
with simulation time step = 0.001. 
The number of states of the discretized PDE \eqref{eq:discrete-pde} is $p=128$. 
The number of control basis functions is $m=10$, and they are $\chi_j$ is the indicator functions of $[\frac{i}{10},\frac{(i+1)}{10}]$. The regularization parameter for robust control is $r = 0.002$.
The number of states in the DMDc model \eqref{eq:DMDc} is $n=10$. The matrices  $Q = \id$, $G=\id$, $R=0.1\id$ for both, the full nonlinear control and control using DMDc model.
The number of dual EnKF particles is chosen as $N=1000$. 
The controlgym library \cite{zhang2023controlgym} is used for PDE simulation.

\end{document}